\documentclass[letterpaper, 10 pt, conference]{ieeeconf}

\IEEEoverridecommandlockouts

\usepackage{relsize}
\usepackage{amsmath,amssymb,amsfonts,mathtools}
\usepackage{nicematrix,nicefrac}
\usepackage{algorithmic}
\usepackage{textcomp}
\usepackage{xcolor}
\usepackage{booktabs}
\usepackage{graphicx}
\usepackage{epsfig}
\usepackage{times}
\usepackage{siunitx}
\NewCommandCopy\OldSI\SI
\RenewDocumentCommand\SI{ O{} m m }{%
\nobreak%
\OldSI[#1]{#2}{#3}%
\nobreak%
}

\usepackage{tikz}
\usepackage{xstring}
\usepackage{forloop}
\usepackage{colortbl}
\usepackage{soul}
\usepackage{array}

\let\labelindent\relax
\usepackage[inline]{enumitem}

\usepackage{multirow}
\usepackage{multicol}
\usepackage{arydshln}
\usepackage{pgfgantt}
\usepackage{rotating}
\usepackage{float}
\usepackage{algorithm}
\usepackage[edges]{forest}

\usepackage[caption=false,font=footnotesize]{subfig}
\usepackage{cite} % IEEE numeric citations

\makeatletter
\let\NAT@parse\undefined
\makeatother

\usepackage[
colorlinks=true,
linkcolor=blue,
citecolor=blue,
urlcolor=blue,
pdfborder={0 0 0}
]{hyperref}

\newcommand{\g}{\textnormal{\textsl{g}}}

\renewcommand{\c}{\mathrm{c}}
\renewcommand{\d}{\nabla}

\newcommand{\s}{{\mathrm{s}}}

\renewcommand{\c}{{\mathrm{c}}}
\renewcommand{\t}{{\mathrm{t}}}
\newcommand{\norm}[1]{\left\|#1\right\|}
\newcommand{\dotp}[2]{\langle #1,#2\rangle}

\newcommand{\cC}{{\mathcal C}}

\newcommand{\cN}{{\mathcal N}}

\newcommand{\cS}{{\mathcal S}}

\newcommand{\cX}{{\mathcal X}}

\newcommand{\cZ}{{\mathcal Z}}

\newcommand{\SO}{\mathsf{SO}}

\newcommand{\eqdef}{\mathrel{\mathop:}=}

\newcommand{\R}{\mathbb R}
\newcommand{\rS}{\mathbb S}
\newcommand{\col}[1]{\mathrm{col}\!\left(#1\right)}
\newcommand{\cma}{\mathord{,}}

\DeclareMathOperator{\diag}{diag}

\makeatletter
  \renewcommand*\env@matrix[1][*\c@MaxMatrixCols c]{%
	\hskip -\arraycolsep
	\let\@ifnextchar\new@ifnextchar
	\array{#1}}
\newcommand*\bigcdot{\mathpalette\bigcdot@{1.5}}
\newcommand*\bigcdot@[2]{\mathbin{\vcenter{\hbox{\scalebox{#2}{$\m@th#1\cdot$}}}}}
\makeatother

\newcounter{excounter}[section]

\newcounter{rmcounter}

\newcounter{dfcounter}

\newcounter{thmcounter}

\newcounter{corcounter}[thmcounter]

\newcounter{prcounter}

\newcounter{clcounter}

\usepackage{amsthm}
\usepackage{mathrsfs}
\usepackage{placeins}

\newtheorem{theorem}{Theorem}
\newtheorem{lemma}[theorem]{Lemma}

\theoremstyle{definition}
\newtheorem{definition}{Definition}
\newtheorem{assumption}{Assumption}
\newtheorem{problem}{Problem}

\theoremstyle{definition}
\newtheorem{remark}{Remark}
\makeatletter

\newcommand{\Rmnum}[1]{\expandafter\@slowromancap\romannumeral #1@}
\makeatother

\usepackage[caption=false,font=footnotesize]{subfig}

\title{\LARGE \bf
Path-following Control of a Quadrotor using Quasi-Static Transverse Feedback Linearization
}
\author{Mohamed Al Lawati$^{1}$ and Adeel Akhtar$^{2}$% <-this % stops a space
\thanks{$^{1}$ Department of Mechanical and Industrial Engineering, Sultan Qaboos University, Muscat, Oman. {\tt\small mlawati@squ.edu.om}}
\thanks{$^{2}$ Department of Mechanical and Industrial Engineering, New Jersey Institute of Technology, NJ, USA. {\tt\small adeel.akhtar@njit.edu}}
}

\begin{document}

\maketitle
\thispagestyle{empty}
\pagestyle{empty}

%%%%%%%%%%%%%%%%%%%%%%%%%%%%%%%%%%%%%%%%%%%%%%%%%%%%%%%%%%%%%%%%%%%%%%%%%%%%%%%%
\begin{abstract} 

We propose a quasi-static transverse feedback linearization (QSTFL) controller for a quadrotor to follow a prescribed geometric path, rather than a time-parameterized trajectory. In contrast to existing dynamic-feedback approaches, the controller does not introduce additional controller states. The thrust input is computed algebraically from the current state, eliminating the need for thrust-derivative measurements and numerical integration. The proposed design renders the path-following manifold invariant, ensuring that trajectories initialized on the path remain on it for all future time, while simultaneously regulating tangential velocity and yaw. We establish a diffeomorphic coordinate transformation and prove local exponential stability of the path-following manifold. In addition, closed-form expressions are derived for the thrust and torque inputs. Compared with dynamic-feedback constructions, the controller requires inversion of only a $3\times 3$ decoupling matrix rather than a $4\times 4$ one, leading to a simpler control law and reduced computational complexity. Numerical simulations demonstrate the effectiveness of the proposed method. Code and animations are publicly available at \footnotesize{\texttt{\href{https://gitlab.com/a5akhtar/quasistatic-tfl-uav/}{https://gitlab.com/a5akhtar/quasistatic-tfl-uav/}}}.

\end{abstract}

%%%%%%%%%%%%%%%%%%%%%%%%%%%%%%%%%%%%%%%%%%%%%%%%%%%%%%%%%%%%%%%%%%%%%%%%%%%%%%%%
\section{Introduction}
{
For a vehicle moving along a prescribed curve, a time-parameterized reference can be unsuitable: if the vehicle slows down or stops, the reference continues to evolve and a tracking error is generated. Path following avoids this issue by removing explicit time dependence from the output reference and, with suitable feedback, yields motion invariance along the path~\cite{AkhWasNie2012}. This framework is well suited to mobile robots and, in particular, to the unmanned aerial vehicles (UAVs). In this paper, the UAV under consideration is a quadrotor. Accordingly, throughout the paper, the terms \emph{UAV} and \emph{quadrotor} are used interchangeably; when the meaning is clear from context, we also refer to it simply as the \emph{system}. A quadrotor is a nonlinear underactuated system with six degrees of freedom and four control inputs~\cite{MahKumCor2012}. 

Quadrotor motion control is commonly developed either in cascade form, with an inner attitude loop and an outer position loop~\cite{RozMag2014,BarRamBer2019}, or in monolithic form~\cite{RozMag2012SE3,MahAliAkh2025}. In cascade designs, stability of the subsystems does not by itself imply stability of the full closed loop. Monolithic designs avoid this separation and are well suited to path-following problems with motion invariance. For this reason, we adopt a monolithic control design.

A further distinction is between coordinate-based and coordinate-free quadrotor controllers~\cite{LeeLeoMcC2010a,ChaMcC2006}. Coordinate-free formulations avoid singularities associated with local attitude parameterizations, but the existing designs are typically not formulated as motion-invariant path-following controllers for the full quadrotor dynamics. Here we address path following directly for the full translational and rotational dynamics and obtain a local exponential stability result.

Our approach is based on transverse feedback linearization (TFL)~\cite{RozMag2012SE3,NieConMagTos2008}, which treats path following as a set-stabilization problem. Since quadrotors are differentially flat~\cite{AkhWasNie2012}, one can construct a feedback law and a coordinate transformation that put the dynamics into Brunovský normal form. Existing path-following designs for quadrotors use dynamic feedback~\cite{AkhWasNie2012,RozMag2012SE3}, which introduces additional controller states. In contrast, we use quasi-static feedback~\cite{DelRud98}, so the control input is expressed in terms of auxiliary inputs and their time derivatives, computed from the current state, without dynamic extension. This reduces controller complexity and simplifies implementation.

In this paper, we introduce quasi-static transverse feedback linearization (QSTFL) for quadrotor path following. Unlike existing dynamic-feedback constructions~\cite{AkhWasNie2012,RozMag2012SE3}, the proposed design does not augment the system with extra controller states. Related quasi-static feedback results for quadrotors in~\cite{FriDeBulLoh2012} do not address the full nonlinear dynamics and do not establish motion invariance along a path. In contrast, our controller is designed for the full quadrotor dynamics, follows a broad class of curves, and locally exponentially stabilizes the set of motions along the path, thereby rendering that set controlled invariant. Moreover, the controller requires inversion of a $3\times 3$ decoupling matrix rather than a $4\times 4$ one.

To the best of our knowledge, this is the first QSTFL-based controller for a quadrotor that exponentially stabilizes the motion-invariant path-following set. The main contributions are:
\begin{enumerate*}[label=(\roman*)]
    \item exponential stability of motions along the path (Theorem~\ref{thm:stab}),
    \item a diffeomorphic coordinate transformation without dynamic extension (Lemma~\ref{lem:diffeo}), and
    \item closed-form expressions for the control inputs \eqref{eq:ut} and~\eqref{eq:tau}.
\end{enumerate*}
}

\section{Notation and Math Preliminaries}
\subsection{Notation}
The set of reals is denoted by $\R$. An element $x\in\R^n$ is written in its components as $\col{x_1,x_2,\cdots,x_n}$, where $x_i$ is the $i^{\rm th}$ component of $x$. The first and second time-derivatives  are respectively denoted $\dot x$ and $\ddot x$. For $i\geq3$, the $i^\text{th}$ time-derivative  of $x$ is $x^{(i)}$. For a matrix $R$, its $i^{\rm th}$ column is $R_i$, and its $(i,j)^{\rm th}$ entry is $R_{ij}$. The transpose of $x$ is  $x^\top$. Given a set $\cS$, the restriction of $x$ to $\cS$ is denoted as $x\vert_{\cS}$. The neighborhood of $\cS$ is denoted as $\cN_{\cS}$. 
The Euclidean norm of $x$ is $\|x\|$, 
For $x, y \in \mathbb{R}^n$, the standard inner product is denoted by $\langle x, y \rangle = x^\top y$. For a continuously differentiable function $f:\mathbb{R}^n \to \mathbb{R}$, the gradient at $x \in \mathbb{R}^n$ is
\(
\nabla f(x) = (\frac{\partial f(x)}{\partial x})^\top \in \mathbb{R}^n,
\)
where $\frac{\partial f}{\partial x}$ denotes the vector of partial derivatives with respect to $x$. The Jacobian of $f$ is ${\rm d} f$. 
The unit $n$-sphere is defined as $\mathbb{S}^{n} = \{ x\in\R^{n+1} : \| x\| = 1 \}$. For vectors $x, y \in \mathbb{R}^3$, the cross product operator is defined as
\(
\times : \mathbb{R}^3 \times \mathbb{R}^3 \to \mathbb{R}^3, \;
(x, y) \mapsto x \times y
\). The natural basis in $\R^3$ is $\{e_1, e_2, e_3\}$. 
Trigonometric functions are abbreviated as $\sin(\xi)\!=\! \s_\xi$, $\cos(\xi)\!=\!\c_\xi$, and $\tan(\xi) \!=\! \t_\xi$.

\subsection{Math Preliminaries}
A \emph{smooth closed curve} in $\mathbb{R}^3$ is the image of a smooth embedding
\(
\gamma:\mathbb{S}^1 \to \mathbb{R}^3.
\)
Equivalently, it is a compact connected smooth $1$-dimensional embedded submanifold of $\mathbb{R}^3$. In particular, a smooth closed curve is compact and admits a well-defined smooth tangent direction after choosing an orientation. For example, for any $c\in\mathbb{R}$, the map
\(
\gamma_1(\lambda)=(\cos\lambda,\sin\lambda,c), \; \lambda\in\mathbb{S}^1,
\)
parametrizes the horizontal unit circle at height $c$, which is a smooth closed curve in $\mathbb{R}^3$.

%%%%%%%%%%%%
Our stability results rely on the generalized inverse function theorem~\cite{GuiPol74}, which we restate below.
\begin{theorem}[The generalized inverse function theorem~\cite{GuiPol74}] \label{thm:gift}
Let $\sigma$ be smooth map on $\cX$, and let $\cX^\ast \subset \cX$ be a closed embedded submanifold. Suppose
\begin{enumerate}[label=(\roman*)]
    \item $\sigma\vert_{\cX^\ast}: \cX^\ast \to \sigma(\cX^\ast)$ is a diffeomorphism, and
    \item for all $x^\ast \in \cX^\ast$, ${\rm d}\sigma(x^\ast)$ is invertible.
\end{enumerate}
Then, $\sigma\vert_{\cN_{\cX^\ast}}$ is a diffeomorphism.
\end{theorem}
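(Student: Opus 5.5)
This is the classical tubular-neighbourhood form of the inverse function theorem (Guillemin--Pollack). We take $\cX$ and its target to be manifolds of the same dimension, and $\sigma$ smooth. The plan has three steps: show $\sigma$ is a local diffeomorphism near $\cX^\ast$, then show it is injective on some neighborhood $\cN_{\cX^\ast}$, then conclude.

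\textbf{Local diffeomorphism.} By hypothesis (ii), ${\rm d}\sigma(x^\ast)$ is invertible at every $x^\ast\in\cX^\ast$. Invertibility of a linear map is an open condition and ${\rm d}\sigma$ depends continuously on $x$, so ${\rm d}\sigma$ stays invertible on an open set $U\supset\cX^\ast$. The ordinary inverse function theorem then makes $\sigma\vert_U$ a local diffeomorphism. In particular $\sigma(U)$ is open, and any injective restriction of $\sigma$ to an open subset of $U$ is a diffeomorphism onto its image, with smooth inverse given by the local inverses.

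\textbf{Injectivity near $\cX^\ast$.} We argue by contradiction, first in the case $\cX^\ast$ compact; this covers the application, where $\cX^\ast$ is built from a smooth closed curve and is therefore compact. Fix a metric on $\cX$. Suppose that for every $k$ the $1/k$-neighborhood of $\cX^\ast$ contains points $a_k\neq b_k$ with $\sigma(a_k)=\sigma(b_k)$. By compactness, after passing to subsequences, $a_k\to a$ and $b_k\to b$ with $a,b\in\cX^\ast$. Continuity gives $\sigma(a)=\sigma(b)$, and hypothesis (i) (injectivity on $\cX^\ast$) forces $a=b$. But $\sigma$ is injective on some neighborhood $W$ of $a$ by the local inverse function theorem, while $a_k,b_k\in W$ for large $k$, which is a contradiction. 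Hence $\sigma$ is injective on some $1/k$-neighborhood of $\cX^\ast$, which we intersect with $U$.

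\textbf{Noncompact case and conclusion.} This is the main obstacle. If $\cX^\ast$ is only closed, the sequences need not converge. Here I would follow Guillemin--Pollack: use that $\sigma(\cX^\ast)$ is closed, because $\sigma\vert_{\cX^\ast}$ is a diffeomorphism onto it, possibly after assuming properness. Exhaust $\cX^\ast$ by compact sets $K_i$, apply the compact argument on each to obtain radii $\epsilon_i>0$, and build a neighborhood whose width $\epsilon(x)$ varies continuously and is bounded by the $\epsilon_i$. The paper only needs the compact case, so I would state that reduction explicitly. Combining the injective open neighborhood $\cN_{\cX^\ast}$ with the first step shows that $\sigma\vert_{\cN_{\cX^\ast}}$ is a bijection onto the open set $\sigma(\cN_{\cX^\ast})$ with smooth local inverses, hence a diffeomorphism.
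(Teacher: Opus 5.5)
The paper gives no proof of this statement. It quotes it from Guillemin--Pollack, where it is posed for a \emph{compact} submanifold, and just before Lemma~\ref{lem:diffeo} the paper itself says the theorem ``requires $\cX^\ast$ to be compact.'' For that case your argument is correct and is the standard one. Invertibility of ${\rm d}\sigma$ is an open condition, so the inverse function theorem reduces everything to injectivity on some neighborhood. Your sequence argument then supplies that injectivity: the two limits coincide by injectivity of $\sigma\vert_{\cX^\ast}$, which contradicts local injectivity at the common limit. You only use injectivity from (i). That suffices here because a continuous injection of a compact space into a Hausdorff space is automatically a homeomorphism onto its image.

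Your noncompact paragraph, however, is not a proof. If ``closed'' is read as ``closed subset, possibly noncompact,'' there is a real gap, for two reasons.

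First, $\sigma(\cX^\ast)$ need not be closed just because $\sigma\vert_{\cX^\ast}$ is a diffeomorphism onto it. Take $\sigma(x,y)=(\arctan x,y)$ on $\R^2$ and $\cX^\ast=\R\times\{0\}$. Closedness of the image is not what is needed anyway.

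Second, running the compact argument on each $K_i$ gives injectivity near each $K_i$ separately. It says nothing about a pair $a\neq b$ with $a$ near $K_1$, $b$ near a distant part of $\cX^\ast$, and $\sigma(a)=\sigma(b)$, and that cross-term is the whole difficulty.

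The missing ingredient is (i) in full strength, i.e.\ continuity of $(\sigma\vert_{\cX^\ast})^{-1}$, which compactness gave you for free. For example, with metrics $d$ on $\cX$ and on the target:
\begin{enumerate}[label=(\alph*)]
\item Let $R(x)\in(0,1]$ be the supremal radius with $B(x,R(x))\subset U$ and $\sigma$ injective on it. Then $R(z)\le R(x)+d(x,z)$.
\item Choose $s(x)>0$ such that $z\in\cX^\ast$ and $d(\sigma(z),\sigma(x))<s(x)$ imply $d(z,x)<R(x)/4$.
\item Choose $\varepsilon(x)\le R(x)/4$ with $\sigma(B(x,\varepsilon(x)))\subset B(\sigma(x),s(x)/2)$.
\end{enumerate}
Now suppose $a\in B(x,\varepsilon(x))$, $b\in B(z,\varepsilon(z))$, $\sigma(a)=\sigma(b)$, and say $s(z)\le s(x)$. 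Then $d(\sigma(z),\sigma(x))<s(x)$, so $d(z,x)<R(x)/4$ and $\varepsilon(z)\le 5R(x)/16$. Hence both $a$ and $b$ lie in $B(x,R(x))$, which forces $a=b$. So $\sigma$ is injective on $\bigcup_{x\in\cX^\ast}B(x,\varepsilon(x))$.

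Either add this argument, or drop the paragraph and state compactness as a hypothesis. Compactness is all the paper uses: Lemma~\ref{lem:diffeoactness} is proved precisely to feed Lemma~\ref{lem:diffeo}.
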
%
\noindent This theorem generalizes the inverse function theorem in the sense that  $\cX^\ast$ is a set---not a single point.

%%%%%%%%%%%%%%%%%%%%%%%%%%%%%%%%%%%%%%%%%%%%%%%%%%%%%%%%%%%%%%%%%%%%%%%%%%%%%%%%
\section{Modeling} \label{sec:modl}
%MA1: revision done. 
%\adeel{AA: As you mentioned, this paragraph needs a bit of improvement. I have tried to revise the firs 2-3 sentences.}
Consider a standard quadrotor model~\cite{MahKumCor2012,AkhWasNie2012}, and let $\mathscr{N} = \{n_1,n_2,n_3\}$ be a fixed navigation frame with origin $O_{\mathscr{N}}$, where $n_1,n_2,n_3\in\R^3$ are three orthonormal basis vectors aligned with the north, east, and down directions, respectively. Also, let $b_1,b_2,b_3\in\R^3$ form a right-handed orthonormal basis, and let $\mathscr{B}= \{b_1, b_2, b_3\}$ be a frame whose origin, $O_{\mathscr{B}}$, is fixed at the UAV's center of mass (CoM). The position and velocity, respectively, of $O_{\mathscr{B}}$ relative to $O_{\mathscr{N}}$, are denoted  $p \in \R^3$ and $v = \dot{p} \in \R^3$. Both $p$ and $v$ are expressed in $\mathscr{N}$. The attitude of $\mathscr{B}$ relative to $\mathscr{N}$ is expressed by a rotation matrix $R \in \SO(3)$, whose columns are $b_1$, $b_2$, and $b_3$. The angular velocity of $\mathscr{B}$ relative to $\mathscr{N}$ is denoted by $\Omega$, and it is expressed in the body frame $\mathscr{B}$. The UAV is actuated by positive thrust $u_t \in (0,\infty)$ and torque $\tau \in \R^3$. The thrust acts along the $-b_3$ direction, and the $i^{\rm th}$ torque component $\tau_i$ acts about $b_i$, $i=1,2,3$. The UAV mass and moment of inertia are, respectively, denoted by $m$ and $J$.
The equations of motion of a standard UAV are given by~\cite{RozMag2012SE3,MahAliAkh2025}
\begin{subequations}
	\label{eq:UAV_geo}
	\begin{align}
		\dot{p} &= v, &
		\dot{v} &= \g e_3 -  \frac{1}{m}u_t R e_3 \label{eq:vdt}  \\
		\dot{R} &= RS({\Omega}), &
		\dot{\Omega} &= J^{-1}(\tau - \Omega \times J\Omega). \label{eq:OMdt}
	\end{align}
\end{subequations} 
Notice that the coupling between the translational  dynamics, \eqref{eq:vdt} and the rotational dynamics, \eqref{eq:OMdt} occurs through $Re_3 = b_3$. Hence, among the twelve possible Euler parameterizations~\cite{SchJun18}, we choose the 2-1-3 Euler rotation sequence since it yields the simplest $b_3$, i.e., no yaw dependence, and does not contain a singularity at hover state.

Let $\theta$, $\phi$, and $\psi$ be in $\R$ and represent respectively, pitch, roll, and yaw of the UAV. We define the UAV's attitude by $\eta := \col{\theta, \phi, \psi}$. 
%Throughout the paper, we denote the $i^{\rm th}$ column of $R$ as $R_i$ using the 2-1-3 Euler parameterization, i.e., $R_i = b_i$. 
Thus, \eqref{eq:UAV_geo} is parameterized as 
%MA1: addressed
% \adeel{AA: either we should stick with Eq. (.) or just (.) while referring to an equation. I think the second one is better.}
\begin{equation}
	\label{eq:UAV_Euler}
	\begin{aligned}
		\dot{p} &= v, &
		\dot{v} &= \g e_3 - \frac{1}{m}u_t R_3, \\
		\dot{\eta} &= W\Omega, &
		\dot{\Omega} &=   J^{-1}(\tau - \Omega \times J\Omega),
	\end{aligned}
\end{equation}
where
\begin{equation*}
	 R_3 = b_3 = \left[\begin{array}{ccc}\s_\theta\c_\phi\\ -\s_\phi\\ \c_\theta\c_\phi \end{array}\right],\text{and} \;\;
	W = \left[\begin{array}{ccc}
		\s_\psi / \c_\phi & \c_\psi / \c_\phi & 0 
		\\
		\c_\psi & -\s_\psi & 0 
		\\
		\s_\psi \t_\phi & \c_\psi \t_\phi & 1 
	\end{array}\right].
\end{equation*}
Next, we define the state and input vectors, respectively, as $x \eqdef \col{p,v,\eta,\Omega} \in \cX \subset \R^{12}$ and $u \eqdef \col{u_t, \tau} \in (0,\infty) \times \R^3$. Hence, \eqref{eq:UAV_Euler} can be written in a control-affine form as
\begin{equation}
	\label{eq:UAV}
	\dot{x} = f(x) + G(x)u,
\end{equation}
where the vector fields $f:\R^{12} \to \R^{12}$ and $G:\R^{12} \to \R^{12\times4}$ are given by \small
\begin{equation*}
	 f(x) = 
	 \begin{bmatrix}
	 	v\\\g e_3 \\ W\Omega\\ -J^{-1}(\Omega \times J\Omega)
	 \end{bmatrix}, \text{ and} \quad
	  G(x) = 
	  \begin{bmatrix}
	  	0_{3\times 4}\\
	  	\begin{matrix}
	  		-Re_3/m & 0_{3\times 3}
	  	\end{matrix}\\
	  	0_{3\times 4}\\
	  	\begin{matrix}
	  		0_{3\times 1} & J^{-1}
	  	\end{matrix}
	  \end{bmatrix}.
\end{equation*}
\normalsize
The degree of underactuation of~\eqref{eq:UAV} is $2$. Also, the model~\eqref{eq:UAV} is not well-defined on the singularity point $\phi = \pm \pi/2$.
%%%%%%%%%%%%%%%%%%%%%%%%%%%%%%%%%%%%%%%%%%%%%%%%%%%%%%%%%%%%%%%%%%%%%%%%%%%%%%%%
\section{Problem formulation}\label{sec:prblm}
Given a UAV, the main objective is to drive the UAV CoM to a desired geometric path $\cC$ in 3D space while imposing invariance properties on $\cC$. By ``geometric", we mean that $\cC$ is a curve in $\R^3$ \textit{not} parameterized by time. The invariance properties simply mean that if the UAV is initialized on the path with velocity tangent to the path, it stays on the path  for all time. This is in contrast to the conventional trajectory-tracking objective where such invariance is not guaranteed.

In this work, the desired path $\cC$ is assumed to be a compact connected
smooth embedded curve in $\R^3$. Moreover, there exists an open neighborhood
$U_{\cC}\subset\R^3$ of $\cC$ and smooth functions
$h_1,h_2:U_{\cC}\to\R$ such that
\begin{equation}\label{eq:C}
\cC=\{p\in U_{\cC}: h_1(p)=h_2(p)=0\},
\end{equation}
and
\begin{equation}\label{eq:regularity_C}
\nabla h_1(p)\times \nabla h_2(p)\neq 0,
\qquad \forall p\in U_{\cC}.
\end{equation}
Hence, $\cC$ is a regular intersection of two surfaces and therefore a smooth
embedded one-dimensional submanifold of $\R^3$. After choosing an orientation,
the unit tangent field associated with $\cC$ is well defined and smooth on
$U_{\cC}$, and is given by
\begin{equation}\label{eq:rho}
\rho(p)=
\frac{\nabla h_1(p)\times \nabla h_2(p)}
{\|\nabla h_1(p)\times \nabla h_2(p)\|}.
\end{equation}
Therefore, while on the path, the tangential speed of the UAV CoM is
\(
v\vert_{\cC}=\langle v,\rho(p)\rangle .
\)
Since \eqref{eq:regularity_C} holds on $U_{\cC}$, the field $\rho$ is well defined
throughout a neighborhood of $\cC$.

\begin{assumption}\label{asp:pp}
Given the UAV model~\eqref{eq:UAV} and a path $\cC$ satisfying \eqref{eq:C}--\eqref{eq:regularity_C},
assume that the following conditions hold on an open neighborhood $\cN_{\cC}\subset U_{\cC}$
of $\cC$:
    \begin{enumerate}[label=\textbf{A\arabic*} ]
        \item \label{asp:ut}$u_t \neq 0$ and
        \item \label{asp:dt} $\dotp{\d h_2}{R_3} \neq 0$.
    \end{enumerate}
\end{assumption}

Assumption~\ref{asp:ut} implies that there is always nonzero thrust during motion. Indeed, $u_t=0$ corresponds to all motors being shut off, which is undesirable for safe flight operations.  As for~\ref{asp:dt}, on $\cC$, we realize that $\dotp{\d h_2}{R_3} = 0$ corresponds to $R_3$ being in a tangent plane to $h_2=0$, meaning the thrust produces no force in the direction needed to influence $h_2$, causing a singularity in~\eqref{eq:ut}.

Let us equip~\eqref{eq:UAV} with an output $h \in \R^4$, defined as 
\begin{equation}
h = \col{
	h_1(p), h_2(p), \dotp{v}{\rho} - v_{d},
	\psi - \psi_{d}
},
\end{equation}
where $v_{d}, \psi_{d} : \R^3 \to \R$ represent, respectively, the desired tangential velocity of UAV CoM and desired UAV yaw at a given point $p$ on the path $\cC$. Note that when $h_1=h_2=0$, the third output component $\dotp{\rho}{v} - v_d$ measures the deviation of the UAV velocity tangent to the path from the desired tangential velocity $v_d$. With the above setup, the path-following problem (PFP) can be formulated as follows.
\begin{problem}[Path Following Problem (PFP)]
\label{prob:Quasi-PF-Problem} 
Given the UAV model expressed in control-affine form~\eqref{eq:UAV} and a path~\eqref{eq:C} expressed as a set of points in the output space without a notion of time, design a feedback controller %$\kappa: \R^{12} \to \R^4$, $x\mapsto \kappa(x) = u$,
such that the closed-loop UAV system achieves the following goals:
\begin{enumerate}[label=\textbf{G\arabic*} ]
	\item \label{goal:1} The UAV CoM approaches $\cC$ exponentially fast.
	\item \label{goal:2} The UAV CoM's motion along $\cC$ is controlled invariant.
	\item \label{goal:3} The controller %$\kappa(x)$
    achieves two additional control specifications while motion is on $\cC$, namely, $\dotp{v}{\rho} \to v_{d}$ and $\psi \to \psi_{d}$ as $t \to \infty$. 
\end{enumerate}
\end{problem}
\begin{remark}\label{rmk:dea}
The authors~\cite{RozMag2012, AkhWasNie2012} solved a similar PFP but rely on dynamic extension, introducing thrust and its time-derivative as controller states.  This augmentation has two drawbacks: recovering thrust requires either solving a differential equation or integrating twice from sensor measurements, which poses practical challenges. Also, gain tuning must account for two extra states. The solution presented in this paper overcomes these challenges.
\end{remark}

%%%%%%%%%%%%%%%%%%%%%%%%%%%%%%%%%%%%%%%%%%%%%%%%%%%%%%%%
%%%%%%% Solution using QSTFL  
%%%%%%%%%%%%%%%%%%%%%%%%%%%%%%%%%%%%%%%%%%%%%%%%%%%%%%
\section{Solution using a QSTFL}\label{sec:fdk}
Intuitively, our solution to the PFP is attained by driving $h$ to zero using a feedback. When $h=0$, the feedback should maintain this value for all time. We rely on QSF to feedback linearization output dynamics~\cite{DelRud98}, from which we adapt the following definition
\begin{definition}[Quasi-static feedback]
Given a dynamical system $\dot{x}=f(x,u)$, a feedback is called quasi-static if it is of the form
\[
    u_i = \varphi_i(x,\nu,\dot{\nu}, \dots, \nu^{(s)}), \quad i = 1,\dots, m
\]
where $s$ is a finite integer,  $\varphi_i$ are smooth functions, and
$\nu$ is an auxiliary input such that  
\[
 \nu = \tilde{\varphi}(x).
\]
\end{definition}

Taking time-derivatives of each output until an input appears yields the decoupling matrix $A$, whose invertibility guarantees input-output linearization. Specifically, differentiating $h_1$, $h_2$, $h_4$ twice and $h_3$ once gives
\begin{equation*}
    A = \left[\!\!\!\!\begin{array}{c:c}
  -\dfrac{D^\top \! R_3}{m} & 
  \begin{matrix} 0_{3\times3} \\[6pt] e^\top W  J^{-1}\end{matrix}
\end{array}\!\!\!\!\right] \!,  
D = \left[\d h_1, \d h_2, \rho, -\d \psi_d\right].
\end{equation*}
Notice that $A$ is singular of constant rank 2 since, away from the model singularity $\phi=\pm \pi/2$, two rows are always independent. 
This means that we have two  affine independent relations between the inputs and  output derivatives. Furthermore, we realize that at hover, i.e., UAV is horizontally oriented at rest, the decoupling matrix $A$ becomes
\begin{equation*}
    A_0  = 
    \begin{bmatrix}
       -\frac{\partial h_1 / \partial p_3}{m}&0&0&0\\[5pt]
       -\frac{\partial h_2 / \partial p_3}{m}&0&0&0\\[5pt]
      0&0&0&0\\[5pt]
       0&0&0&\frac{1}{J_{33}}
    \end{bmatrix}.
\end{equation*}

We note that the independence of the surfaces $h_1$ and $h_2$ implies that  both $\partial h_1 / \partial p_3$ and $\partial h_2 / \partial p_3$ are not zero simultaneously. We choose $h_2$ such that $\partial h_2 / \partial p_3 \neq 0$ at hover; 
the same construction applies if $h_1$ is chosen instead. Hence, the expressions of 
%either $\ddot{h}_1$ or 
$\ddot{h}_2$ along with $\ddot{h}_4$ form two independent relations with the input $u$. These two relations are valid at the hover case. 
Therefore, we assign two auxiliary inputs $\nu_2$ and $\nu_4$ as
\vspace{-10pt}

    \begin{align}\label{eq:nu24}
        \nu_2 &= \ddot{h}_2,   &
         \nu_4  &= \ddot{h}_4. 
    \end{align}

If $\nu_2$ and $\nu_4$ were known, then~\eqref{eq:nu24} provides two affine equations in terms of the input $u$. To design $\nu_2$ and $\nu_4$, let's define the error coordinates $z^i = \col{h_i, \dot{h}_i}$, for $i\in \{2,4\}$. Hence~\eqref{eq:nu24} results in 
\begin{equation}\label{eq:zi24}
    \dot{z}^i = A_{c2}z^i + B_{c2}\nu_i,
\end{equation}
where $(A_{c2}$, $B_{c2})$ is a 2-by-2 Brunovsky pair.

Next, we show that one can readily compute the thrust $u_t$ using~\eqref{eq:nu24} and the choice of $\nu_2$ made in~\eqref{eq:nu}. 
Let $H$ be the Hessian of $h_2$ with respect to $p$. Thus, using the system dynamics and~\eqref{eq:nu24}, we have
$
    \nu_2 = \ddot{h}_2 = v^\top H v + \dotp{\d h_2}{e_3}\g - \dotp{\d h_2}{R_3}u_t/m,
$
which can be solved for the thrust $u_t$ giving
\begin{equation}\label{eq:ut}
    u_t = u_t(x, \nu_2) = m\frac{v^\top H v  - \nu_2 + \g (\partial h_2 / \partial p_3)}{\dotp{\d h_2}{R_3}}.
\end{equation}

\begin{remark}
    In this design, we compute the thrust algebraically by performing elementary algebraic operations. This is unlike the dynamic controller. See Remark~\ref{rmk:dea}. The condition $u_t > 0$ in \eqref{eq:ut} is only guaranteed near $\mathcal{X}^*$. For large initial errors, $K_2$ 
    should be designed to avoid $u_t \leq 0$, or $\nu_2$ can be 
    saturated to enforce $u_t > 0$ at the cost of temporarily 
    losing the exponential stability guarantee.
\end{remark}
Similarly, letting $H_{d}$ denote the Hessian of $\psi_{d}$ with respect to $p$, we realize that 
\begin{equation}
\label{eq:nu4expr}
    \begin{aligned}
        \nu_4 = \ddot{h}_4 &= e_3^\top  \left( W J^{-1}\tau + \dot{W}\Omega - W J^{-1}(\Omega\times J\Omega)\right) \\
        &- v^\top H_{d} v + \dotp{R_3}{\d \psi_{d}} u_t/m - \g \dotp{\d \psi_{d}}{e_3}.
    \end{aligned}
\end{equation}
This yields only one equation in $\tau$. We differentiate further to obtain $h_1^{(4)}$ and $h_3^{(3)}$, giving two additional independent equations in $\tau$. Thus, we define auxiliary inputs $\nu_1$ and $\nu_3$

\begin{equation*}
        \nu_1 = h_1^{(4)},\quad
        \nu_3 = h_3^{(3)}.
\end{equation*}

The error coordinates are defined as $z^1 = \mathrm{col} \bigl(h_1\cma \dot{h}_1\cma \ddot{h}_1\cma h_1^{(3)}\bigr)$ and $z^3 = \col{h_3\cma \dot{h}_3\cma \ddot{h}_3}$. As a result, the corresponding error dynamics becomes
\begin{equation}\label{eq:z1z3}
    \begin{aligned}
        \dot{z}^1 &= A_{c4}z^1 + B_{c4}\nu_1,&
        \dot{z}^3 &= A_{c3}z^3 + B_{c3}\nu_3,
    \end{aligned}
\end{equation}
where $(A_{c4}, B_{c4})$ and $(A_{c3}, B_{c3})$ are, respectively, the 4-by-4 and 3-by-3 Brunovsky pairs. 
\begin{remark}
    Note that the four decoupled dynamics in~\eqref{eq:zi24} and \eqref{eq:z1z3} are linear-time invariant. Thus, there exists a plethora of inputs $\nu_i$ for each decoupled system to stabilize its error dynamics.
\end{remark}

We now form the other two equations. Computing two additional time-derivatives for $\ddot{h}_1$ and $\dot{h}_3$, and realizing that $u_t = u_t(x,\nu_2)$, we get
\begin{align*}
    h_1^{(4)} &= h_1^{(4)}(x,\nu_2,\dot{\nu}_2,\ddot{\nu}_2), &
    h_3^{(3)} &= h_3^{(3)}(x,\nu_2,\dot{\nu}_2).
\end{align*}
Direct computation reveals
\begin{subequations}\label{eq:nu13}
    \begin{align}
        \nu_1 = h_1^{(4)} &=  
        \frac{u_t(\d h_1 \times \d h_2)^\top}{m\dotp{\d h_2}{R_3}}\begin{bmatrix}
            \frac{ R_1}{J_{11}}   & \frac{ R_2}{J_{22}} &0\end{bmatrix}\tau \nonumber\\  &+\tilde{b}_1(x{,\nu_2,\dot{\nu}_2, \ddot{\nu}_2})\\
            \nu_3 = h_3^{(3)} &= \frac{ u_t \rho^\top}{ m\dotp{\d h_2}{R_3}}\begin{bmatrix}
            \frac{(\d h_2 \times R_1)}{J_{11}} & \frac{(\d h_2 \times R_2)}{J_{22}} & 0\end{bmatrix}\tau  \nonumber\\
            & +\tilde{b}_3(x,\nu_2,\dot{\nu}_2),
    \end{align}
\end{subequations}
where the expressions of $\tilde{b}_1$ and $\tilde{b}_3$ are lengthy and, hence, omitted, and $u_t$ is precomputed in~\eqref{eq:ut}. As a result, combining~\eqref{eq:nu4expr} and \eqref{eq:nu13}, we arrive at
\begin{equation}
\label{eq:feedback_3by3}
    \begin{bmatrix}
        \nu_1\\ \nu_3 \\ \nu_4
    \end{bmatrix} = \tilde{A} \tau + \begin{bmatrix}
        \tilde{b}_1\\
        \tilde{b}_3\\
          \dotp{\d \psi_{d}} {  R_3 \frac{u_t}{m}- \g  e_3}- v^\top H_{d} v
        \end{bmatrix},
\end{equation}
where $\tilde{A}$ is the new decoupling matrix given by
\begin{equation}\label{eq:Atilde}
    \begin{aligned}
        \tilde{A} =
        \begin{bmatrix}
           u_t\frac{\dotp{\d h_1 \times \d h_2}{R_1}}{mJ_{11}\dotp{\d h_2}{R_3}}   & u_t\frac{\dotp{\d h_1 \times \d h_2}{R_2}}{mJ_{22}\dotp{\d h_2}{R_3}} &0 \\[8pt]
           u_t\frac{\dotp{\rho}{\d h_2 \times R_1}}{mJ_{11}\dotp{\d h_2}{R_3}} & u_t\frac{\dotp{\rho}{\d h_2 \times R_2}}{mJ_{22}\dotp{\d h_2}{R_3}} & 0 \\[8pt]
           \frac{\s_{\psi}\t_{\phi}}{J_{11}} & \frac{\c_{\psi}\t_{\phi}}{J_{22}} & \frac{1}{J_{33} }
        \end{bmatrix}.
    \end{aligned}
\end{equation}
\begin{remark}
    This decoupling matrix is 3-by-3 as compared to the 4-by-4 decoupling matrix of~\cite{RozMag2012} and~\cite{AkhWasNie2012}. This dimension simplification is a result of precomputing the thrust in~\eqref{eq:ut}.
\end{remark}

It can be seen from~\eqref{eq:feedback_3by3} that to calculate the input $\tau\in\R^3$, we need to take the inverse of the decoupling matrix $\tilde{A}$. It suffices to investigate the determinant of $\tilde{A}$ given by 
\begin{equation}
    \begin{aligned}
        \det( \tilde{A} ) &=
        - \frac{\norm{\d h_1 \times \d h_2}u_t^2}{m^2J_{11}J_{22}J_{33}\dotp{\d h_2}{R_3}}.
    \end{aligned}
\end{equation}
Assumption~\ref{asp:pp} and the smooth embedded-curve assumption on $\cC$ ensure that $\det(\tilde A) \neq0$ in a neighborhood of the curve. Thus, the torque can be written in terms of state $x$ and auxiliary input $\nu$ as
\begin{align}\label{eq:tau}
 \tau &= \tau(x,\nu,\dot{\nu}_2,\ddot{\nu}_2) \\
 &= \tilde{A}^{-1}\left(
 \begin{bmatrix}
        \nu_1\\ \nu_3 \\ \nu_4
    \end{bmatrix}
    - \begin{bmatrix}
        \tilde{b}_1(x,\dot{\nu}_2,\ddot{\nu}_2)\\
        \tilde{b}_3(x,\dot{\nu}_2)\\
          \dotp{\d \psi_{d}} {  R_3 \frac{u_t}{m}- \g  e_3}- v^\top H_{d} v
        \end{bmatrix}
        \right),\nonumber
\end{align}
where $u_t$ is computed in~\eqref{eq:ut} and the auxiliary inputs $\nu_i$ are given in~\eqref{eq:nu}. 
The auxiliary input choice is made as
\begin{equation}\label{eq:nu}
    \nu_i = -K_iz^i, \quad i=1,2,3,4,
\end{equation}
where $K_1 \in \R^{1\times 4}$, $K_3 \in \R^{1\times 3}$, and $K_2,K_4 \in \R^{1\times 2}$ are stabilizing gain row vectors chosen such that the linear error dynamics~\eqref{eq:zi24} and \eqref{eq:z1z3} are exponentially stable with desired eigenvalues. Notice that the error dynamics, given by~\eqref{eq:zi24} and \eqref{eq:z1z3}, is 11-dimensional. Hence, there exists $1$-dimensional zero dynamics whose stability is to be investigated.

\section{Stability analysis}
\label{sec:stability_analysis}
The 11-dimensional error dynamics~\eqref{eq:zi24},~\eqref{eq:z1z3} are exponentially stabilized by~\eqref{eq:nu}, leaving a 1-dimensional zero-dynamics~\cite{Isi95} whose stability we now analyze. Let $z(x) = \col{z^1(x),z^2(x),z^3(x),z^4(x)} \in \R^{11}$. The zero-dynamics evolve on the zero-dynamics manifold, $\cX^\ast$, given by
\begin{equation}
\label{eq:X-star}
    \cX^\ast = \{x \in \cX: z(x) = 0\}.
\end{equation}

Let the state $x\in \cX \subset \R^{12}$ be transformed to  $(z, \xi) \in \cZ \subset \R^{12}$, i.e., $x\mapsto (z,\xi)$ under some coordinate transformation, where the scalar $\xi$ is to be defined. 

Since the $z$-coordinate captures the error dynamics, which is transverse to $\cC$, a plausible choice for the remaining coordinate, $\xi$, is a one that encodes the motion tangent to $\cC$.
To define $\xi$, we recall that $\cC$ is a smooth embedded-curve by definition. Hence, $\cC$ is diffeomorphic to the unit circle $\rS^1$, and let $\Lambda: \cC \to \rS^1$ be the diffeomorphism. Now, define the projection map $\pi : \cX \to \cC$, $x \mapsto \arg \min_{\tilde{p}\in \cC} \left(\norm{p - \tilde{p}}\right)$, which takes a point in the state-space and returns the closest point on the path $\cC$. With this setup, we define $\xi$ as
\begin{equation}
     \xi = \lambda (x),
\end{equation}
where $\lambda = \Lambda \circ \pi$.
Thus, the new state $  \xi$ represents the arc-length on $\rS^1$ measured from a fixed point on $\rS^1$ in the counterclockwise direction. Since $\cC$ is a smooth compact embedded curve in $\R^3$, by the tubular neighborhood theorem~\cite{GuiPol74}, there exists $\varepsilon > 0$ such that $\pi$ is a smooth well-defined map on $\cN_{\cX^\ast} := \{x \in \cX : \norm{p}_{\cC} < \varepsilon\}$, and we restrict all subsequent results to this neighborhood. Furthermore, for any $x^* \in \cX^*$ with position component $p \in \mathcal{C}$, we have $\pi(x^*) = p$ since $p$ is 
its own closest point on $\mathcal{C}$, and hence 
$\pi|_{\cX^*}$ is onto $\mathcal{C}$. We now define the candidate coordinate transformation $\sigma$ as
\begin{equation}
\label{eq:sigma}
\begin{aligned}
    \sigma: \cN_{\cX^\star} \subset \cX &\to \sigma(\cN_{\cX^\star}) \subset \cZ 
          &x \mapsto \col{z,   \xi} .
    \end{aligned}
\end{equation}

Note that $(z,  \xi) \in \sigma(\cN_{\cX^\star}) \subset \cZ$. For $\sigma$ to be a coordinate transformation, it must be a diffeomorphism between 
$\cN_{\cX^\ast}$
and 
$\sigma{(\cN_{\cX^\ast})}$.
This diffeomorphism will be established using Theorem~\ref{thm:gift}, which requires $\cX^\ast$ to be compact -- a fact established in the following lemma.

\begin{lemma}\label{lem:diffeoactness}
    The zero-dynamics set $\cX^\ast$ is compact.
\end{lemma}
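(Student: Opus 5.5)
The plan is to show that $\cX^\ast$ is the image of the compact path $\cC$ under a continuous map into the state space. The strategy is to solve $z(x)=0$ successively for $p$, then $v$, then $R_3$ (equivalently $(\theta,\phi)$), then $\psi$, then $\Omega$, each as a continuous function of the point $p\in\cC$. Closedness is the easy half. The map $x\mapsto z(x)$ is continuous (indeed smooth) on the neighborhood where Assumption~\ref{asp:pp} holds and $\phi\neq\pm\pi/2$, so $\cX^\ast=z^{-1}(0)$ is closed there. The main work is boundedness, and more precisely exhibiting $\cX^\ast$ as a continuous graph over $\cC$.

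\emph{Position and velocity.} First, $h_1=h_2=0$ forces $p\in\cC$, which is compact by hypothesis. Next, $\dot h_1=\dotp{\d h_1}{v}=0$ and $\dot h_2=\dotp{\d h_2}{v}=0$ force $v$ to be parallel to $\d h_1\times\d h_2$, that is, parallel to $\rho(p)$. Then $h_3=0$ pins the speed, so $v=v_d(p)\rho(p)$. This is bounded on $\cC$ because $v_d$ and $\rho$ are continuous and $\cC$ is compact.

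\emph{Attitude (tilt).} The conditions $\ddot h_1=\ddot h_2=0$ and $\dot h_3=0$ are three independent linear constraints, via the rows $\d h_1^\top,\d h_2^\top,\rho^\top$, on the acceleration $\dot v$. Their remaining terms depend only on $(p,v)$, which are already fixed. Hence $\dot v=a(p)$ for a continuous map $a$ on $\cC$. From \eqref{eq:vdt},
\begin{equation*}
\frac{u_t}{m}R_3=\g e_3-a(p).
\end{equation*}
Assumption~\ref{asp:ut} together with $u_t>0$ implies $\g e_3-a(p)\neq0$, and then $R_3=(\g e_3-a(p))/\norm{\g e_3-a(p)}$ is uniquely and continuously determined, as is $u_t$. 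The pair $(\theta,\phi)$ is recovered from $R_3$ by the explicit formula for $b_3$, away from $\phi=\pm\pi/2$.

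\emph{Yaw and angular velocity.} The condition $h_4=0$ gives $\psi=\psi_d(p)$. Then $h_1^{(3)}=0$ and $\ddot h_3=0$ determine the jerk, and hence $\dot R_3$. Since $\dot R_3=R\,S(\Omega)e_3$ involves only $\Omega_1$ and $\Omega_2$, these two components are fixed. Finally, $\dot h_4=0$ fixes $\dot\psi$, and the third row of $W$ then gives $\Omega_3$. Every component of $x\in\cX^\ast$ is therefore a continuous function of $p\in\cC$. Consequently $\cX^\ast$ is a continuous image of a compact set, or a finite union of such images, and so is compact.

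The main obstacle is the angle coordinates. With $\eta\in\R^3$, the condition $\psi=\psi_d(p)$ and the recovery of $(\theta,\phi)$ from $R_3$ only determine the angles modulo $2\pi$. Read literally in $\R^{12}$, the set $\cX^\ast$ could then be a countable union of translated copies and fail to be bounded. I would handle this by treating $\eta$ as taking values in the torus, or equivalently by restricting $\cX$ to one chart, say $\theta,\psi\in(-\pi,\pi]$ and $\phi\in(-\pi/2,\pi/2)$. On such a chart the inverse map $R_3\mapsto(\theta,\phi)$ is single-valued and continuous. I would also need to ensure that the tilt along $\cC$ stays a positive distance from $\phi=\pm\pi/2$. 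Since $R_3$ depends continuously on $p\in\cC$, this follows from compactness, provided it holds pointwise, and that pointwise condition is implicit in Assumption~\ref{asp:pp} and the well-definedness of the model. A secondary check is that $\dotp{\d h_2}{R_3}\neq0$ (Assumption~\ref{asp:dt}) holds, so the thrust formula \eqref{eq:ut} is consistent with the $R_3$ obtained above.
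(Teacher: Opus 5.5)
Your proposal is correct and is essentially the paper's argument. Both show that a point of $\cX^\ast$ is determined by its position $p\in\cC$ ($v=v_d\rho$, then $u_t$ and $R_3$ from $\g e_3-\dot v$, then $\psi=\psi_d$ and $\Omega$); the paper phrases this as injectivity of $\pi\vert_{\cX^\ast}$, so that $\sigma\vert_{\cX^\ast}$ identifies $\cX^\ast$ with $\rS^1$, while you conclude directly that $\cX^\ast$ is a continuous image of the compact $\cC$ — and you are more careful than the paper in obtaining tangency of $v$ from $\dot h_1=\dot h_2=0$ and in fixing an Euler-angle chart (needed only for $\theta,\phi$, since $h_4=0$ pins $\psi\in\R$ exactly). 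One small repair: $h_1^{(3)}=0$ and $\ddot h_3=0$ are only two scalar conditions on the jerk $\ddot v\in\R^3$, so you must also use $h_2^{(3)}=\dot\nu_2=0$ on $\cX^\ast$, just as $\ddot h_2=\nu_2=0$ holds there by the thrust law \eqref{eq:ut} (neither is a component of $z^2$); the independent rows $\d h_1,\d h_2,\rho$ then fix $\ddot v$, hence $\dot u_t=-m\dotp{\ddot v}{R_3}$, $\dot R_3$, and $\Omega_1,\Omega_2$.
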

\begin{proof}
     It suffices to show that $\cX^\ast$ is diffeomorphic to a compact set. This set can be chosen to be the unit circle $\rS^1$. The diffeomorphism will be constructed using the restriction of $\sigma$ on $\cX^\ast$, denoted as $\sigma\vert_{\cX^\ast}$. From the definition of $\sigma$, we have $\sigma\vert_{\cX^\ast} = \col{0,\dots,0,\lambda\vert_{\cX^\ast}}$. 
     %MA1: You are absolutely right :)
     %\adeel{We use $\eta$ to denote three Euler angles earlier. Should it be $\lambda$?}. 
     The mapping $\lambda\vert_{\cX^\ast}$ is composed of two smooth and onto maps\footnote{In fact, the mapping $\Lambda$ is already a diffeomorphism, but $\pi$ is not. Particularly, $\pi$ is not one-to-one. For example, take $\tilde{p}\in \cC$, which can be mapped from $\col{p,v_1,\eta_1,\Omega_1}$ and $\col{p,v_2,\eta_2,\Omega_2}$. In fact, infinitely many such points exist. However, $\pi$ is onto since it covers the entire path $\cC$.}, hence, it is smooth and onto. Thus, we only need to show that $\pi\vert_{\cX^\ast}$ is one-to-one. Now, suppose $p \in \cC$. We need to show that there exists a unique $x^\ast \in \cX^\ast$ such that $\pi(x^\ast)= p$. To do so, we will express $x^\ast$ as a function of $p$. On $\cX^\ast$, we have $h_3 = 0$. Thus, $\dotp{\rho(p)}{v} = v_{d}(p)$. Also, $h_3=0$ implies that velocity of the UAV CoM is tangent to the path $\cC$. This allows us to write $v = \norm{v}\rho$. Since $\norm{\rho}=1$, we immediately have $\norm{v}=v_d(p)$. Thus, $ \| v(p) \| \rho(p) = v_{d}(p) \rho(p)$, which gives
     \begin{equation}\label{eq:vrho}
         v(p) = v_{d}(p) \rho(p).
     \end{equation}
     Hence, $v$ is a function of $p$. From~\eqref{eq:UAV_Euler}, we have $\dot{v} = \g e_3 - u_t/mR_3$ implying \vspace{-5pt}
     \begin{equation}\label{eq:tmp}
        m(\g e_3 - \dot{v}) = u_t R_3.
    \end{equation}
     Since $\norm{R_3}=1$, we arrive at $u_t = \vert u_t \vert = \norm{m(\g e_3 - \dot{v})}$. From~\eqref{eq:vrho}, we have $\dot{v}=v_{d}(p) \dot{\rho}(p) + \dot{v}_{d}(p) \rho(p)$ implying $u_t =\norm{m(\g e_3 - v_{d}(p) \dot{\rho}(p) - \dot{v}_{d}(p)\rho(p))}$. From~\eqref{eq:tmp}, we get 
     \begin{align*}
         \phi \!&= \!\sin^{-1}\!\!\left(\!m\frac{\dot{v}_2}{u_t}   \!\right) \!\! =\! \sin^{-1}\!\!\left(\frac{\left(v_{d}(p) \dot{\rho}(p) + \dot{v}_{d}(p) \rho(p)\right)\cdot e_2}{\norm{\g e_3 - v_{d}(p) \dot{\rho}(p) - \dot{v}_{d}(p)\rho(p)}}   \right)\\
         \theta &= \tan^{-1}\left(  \frac{\left(v_{d}(p) \dot{\rho}(p) + \dot{v}_{d}(p) \rho(p)\right)e_1}{\left(v_{d}(p) \dot{\rho}(p) + \dot{v}_{d}(p) \rho(p)\right)e_3 - \g} \right).
     \end{align*}     
     Also on $\cX^\ast$, $h_4=0$, and therefore, $\psi = \psi_{d}(p)$. Finally, invoking $\Omega = W^{-1}\dot{\eta}$ and using the expressions of $\phi$, $\theta$, and $\psi$ just found, we can write $\Omega = \Omega(p)$. As a result, $x^\ast$ is uniquely determined by $p$. Hence, the mapping $\pi\vert_{\cX^\ast}$ is one-to-one. This result implies that $\sigma\vert_{\cX^\ast}$ is a diffeomorphism, and hence, $\cX^\ast$ and $\rS^1$ are diffeomorphic. As a result, compactness of $\cX^\ast$ immediately follows from the compactness of  $\rS^1$.
\end{proof}

Lemma~\ref{lem:diffeoactness} guarantees the compactness of $\cX^\ast$. We need this property of $\cX^\ast$ to apply Theorem~\ref{thm:gift}. The next lemma uses Theorem~\ref{thm:gift} to prove that $\sigma$ is a diffeomorphism from $\cN_{\cX^\ast}$ to $\sigma(\cN_{\cX^\ast})$.

%In the next lemma, we prove that $\sigma$ is a diffeomorphism using the generalized inverse function theorem~\cite{GuiPol74}.
\begin{lemma}\label{lem:diffeo}
    The map $\sigma: \cN_{\cX^\star} \subset \cX \to \sigma(\cN_{\cX^\star}), x \mapsto (z,  \xi)$ is a diffeomorphism of a neighborhood of $\cX^\ast$ onto its image. 
\end{lemma}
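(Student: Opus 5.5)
We will apply Theorem~\ref{thm:gift} with $\cX^\ast$ as in~\eqref{eq:X-star} and $\sigma$ as in~\eqref{eq:sigma}, so we must check its two hypotheses.

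\emph{Hypothesis (i).} This comes essentially from the proof of Lemma~\ref{lem:diffeoactness}. There we showed that $\sigma\vert_{\cX^\ast}=\col{0,\dots,0,\lambda\vert_{\cX^\ast}}$. We also showed that each $p\in\cC$ determines a unique $x^\ast(p)=\col{p,\,v_d\rho,\,\eta(p),\,\Omega(p)}\in\cX^\ast$, and that the map $p\mapsto x^\ast(p)$ is smooth, being built from $\rho$, $v_d$, $\psi_d$ and the inverse trigonometric formulas. Hence $\sigma\vert_{\cX^\ast}$ has the smooth inverse $\xi\mapsto x^\ast(\Lambda^{-1}(\xi))$ on $\sigma(\cX^\ast)=\{0\}\times\rS^1$. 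Lemma~\ref{lem:diffeoactness} also shows that $\cX^\ast$ is compact, so it is closed. It is an embedded submanifold, because it is the image of the smooth embedding $\rS^1\to\cX$, $\xi\mapsto x^\ast(\Lambda^{-1}(\xi))$, which is an injective immersion of a compact manifold.

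\emph{Hypothesis (ii).} We must show that ${\rm d}\sigma(x^\ast)$ is invertible for every $x^\ast\in\cX^\ast$. We split this into a transversal part and a tangential part. First we show that the $11\times 12$ Jacobian ${\rm d}z(x^\ast)$ has full rank $11$. Then $\ker {\rm d}z(x^\ast)$ is one-dimensional, and since $\cX^\ast=z^{-1}(0)$ is a regular level set of dimension one, this kernel equals $T_{x^\ast}\cX^\ast$. Second, we show that ${\rm d}\lambda(x^\ast)$ does not vanish on this kernel. This holds because $\lambda\vert_{\cX^\ast}$ is a diffeomorphism onto $\rS^1$ by (i), so its differential along $T_{x^\ast}\cX^\ast$ is nonzero. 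Together these two facts make ${\rm d}\sigma(x^\ast)=\col{{\rm d}z,{\rm d}\lambda}$ nonsingular.

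\emph{Full rank of ${\rm d}z$ (main step).} We use the standard relative-degree argument. The outputs $h_1,h_2,h_3,h_4$ have relative degrees $4,2,3,2$ with respect to the quasi-static loop, in which $u_t$ is eliminated through~\eqref{eq:ut}. The decoupling information consists of two pieces. The first is the scalar coefficient $-\dotp{\d h_2}{R_3}/m$ of $u_t$ in $\ddot h_2$, which is nonzero by~\ref{asp:dt}. The second is the $3\times3$ matrix $\tilde A$ in~\eqref{eq:Atilde}, which is invertible on $\cN_{\cC}$ by~\ref{asp:ut},~\ref{asp:dt} and~\eqref{eq:regularity_C}.

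A Lie-derivative argument (Isidori-type) then shows that the covectors ${\rm d}h_i,\dots,{\rm d}h_i^{(r_i-1)}$ are linearly independent. Suppose $\sum c_{ij}\,{\rm d}h_i^{(j)}=0$. We pair this relation successively with the input vector fields and with their iterated brackets with the drift, starting from the highest-order terms. The nonzero scalar and the invertibility of $\tilde A$ then force all $c_{ij}$ to vanish, level by level.

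The main obstacle is that the input structure is mixed. Thrust enters at order two through $h_2$, while torque enters $h_1$ and $h_3$ only after $u_t$ has been substituted as a function of $(x,\nu_2)$. This makes the usual triangular argument messier. If the abstract argument becomes unwieldy, the fallback is a direct computation of ${\rm d}z$ at a point of $\cX^\ast$, in the coordinates $(p,v,\eta,\Omega)$, exploiting block-triangularity:
\begin{itemize}
\item $h_1,h_2$ control the normal directions of $p$ through~\eqref{eq:regularity_C};
\item $\dot h_1,\dot h_2,h_3$ control $v$, with $\rho$ completing the basis;
\item $\ddot h_1,\dot h_3,h_4$ control $\eta$, using~\ref{asp:ut},~\ref{asp:dt} and the form of $R_3$;
\item $h_1^{(3)},\ddot h_3,\dot h_4$ control $\Omega$, through a matrix that is essentially $\tilde A$ with the factor $J^{-1}$ removed.
\end{itemize}
Each diagonal block is invertible under Assumption~\ref{asp:pp}. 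The only direction left free by ${\rm d}z$ is then the tangent to $\cC$ in $p$, and this direction is exactly the one captured by ${\rm d}\lambda$.
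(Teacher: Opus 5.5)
Your proof is correct, but it is organized differently from the paper's. The paper checks hypothesis (ii) of Theorem~\ref{thm:gift} by asserting, without derivation, the closed form
\[
\det({\rm d}\sigma)=\frac{u_t^4\norm{\d h_1\times\d h_2}^3\dotp{\d h_1\times\d h_2}{\d\lambda}\,\c_\phi}{m^4\dotp{R_3}{\d h_2}^2}.
\]
It then argues factor by factor. For the last factor it uses that $\d\lambda\vert_{\cX^\ast}=\d\Lambda$ is tangent to $\cC$, hence independent of $\d h_1,\d h_2$.

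You instead split invertibility into two parts. The transversal part is that ${\rm d}z$ has rank $11$, so $\ker{\rm d}z=T_{x^\ast}\cX^\ast$. The tangential part is that ${\rm d}\lambda\neq 0$ on that kernel, and you get it for free from hypothesis (i). You also make explicit what the paper leaves implicit in (i): the smooth inverse on $\sigma(\cX^\ast)$, and that $\cX^\ast$ is a closed embedded submanifold.

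Your fallback computation is exactly what underlies the paper's formula. Append the ${\rm d}\lambda$ row to the $p$-block. The diagonal blocks in $(p,v,\eta,\Omega)$ then have determinants $\dotp{\d h_1\times\d h_2}{\d\lambda}$, $\norm{\d h_1\times\d h_2}$, $\pm\c_\phi u_t^2\norm{\d h_1\times\d h_2}/(m^2\dotp{\d h_2}{R_3})$ and $\det(\tilde A J)$. Their product is the paper's expression. The paper's route gives one formula in which every degeneracy condition is visible. Yours explains where each factor comes from and can be verified line by line.

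Two things need tightening. First, the ``Isidori-type'' main argument does not apply off the shelf, because there is no vector relative degree here. The decoupling matrix for $(u_t,\tau)$ has rank $2$. With respect to $(\nu_2,\tau)$, the input $\nu_2$ generically already enters $\ddot h_1$ and $\dot h_3$, with coefficients $\dotp{\d h_1}{R_3}/\dotp{\d h_2}{R_3}$ and $\dotp{\rho}{R_3}/\dotp{\d h_2}{R_3}$. So the coordinate block computation is the real proof and should be presented as such. In it, note that in closed loop $\nu_2,\dot\nu_2,\ddot\nu_2$ are linear in $(h_2,\dot h_2)$. Their contributions to ${\rm d}z^1$ and ${\rm d}z^3$ are therefore row operations that touch only the $(p,v)$ columns, and they do not spoil triangularity.

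Second, the $\eta$-block needs its one-line justification, and this also shows you need $\c_\phi\neq0$, which you did not list. With $u_t$ from~\eqref{eq:ut}, the $R_3$-gradients of $\ddot h_1$ and $\dot h_3$ are $-\frac{u_t}{m\dotp{\d h_2}{R_3}}R_3\times(\d h_1\times\d h_2)$ and $-\frac{u_t}{m\dotp{\d h_2}{R_3}}R_3\times(\rho\times\d h_2)$. Since $\partial_\theta R_3\times\partial_\phi R_3=-\c_\phi R_3$, the $(\theta,\phi)$-minor equals the expression above. It is nonzero exactly when $\c_\phi\neq0$, \ref{asp:ut}, \ref{asp:dt} and \eqref{eq:regularity_C} hold. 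The $\psi$-column is handled by $h_4$ alone, because $R_3$ does not depend on yaw.
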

\begin{proof}
    Lemma~\ref{lem:diffeoactness} showed that $\cX^\ast$ is compact. As well, its proof showed that $\sigma\vert_{\cX^\ast}$ is a diffeomorphism, and hence one-to-one. We now invoke Theorem~\ref{thm:gift}. This means that to show $\sigma$ is a diffeomorphism, we need to show that 
    $\det(\rm d \sigma)$ is not zero on $\cX^\ast$. %The determinant of the Jacobian of $\rm d \sigma$ can be computed as
    We compute
    \begin{equation*}
         \det  (\mathrm{d} \sigma)  = \frac{u_t^4 \norm{\d h_1 \times \d h_2}^3 \dotp{\d h_1 \times \d h_2} {\d \lambda} \c_{\phi}}{m^4 \langle R_3, \d h_2\rangle ^2}.
    \end{equation*}
    Based on the tubular neighborhood established 
    above, $\cC$ being a smooth embedded-curve, and by Assumption~\ref{asp:pp}, we immediately have $u_t \neq 0$, $\norm{\d h_1 \times \d h_2} \neq 0$, and $\langle R_3, \d h_2\rangle \neq 0$. In addition, $\phi=\pm \pi/2$ is a model singularity, and hence $\c_{\phi}\neq0$ in the region of model validity.     
    It is left to show that $\dotp{\d h_1 \times \d h_2} {{\d } \lambda} \neq 0$. 
    We have ${\d}\lambda\vert_{\cX^\ast} = {\d}\Lambda \circ 
    {\rm d}\pi\vert_{\cX^\ast} = {\d}\Lambda$. Since $\Lambda$ is a diffeomorphism, ${\d }\Lambda$ 
    is a nonzero tangent to $\cC$. We conclude that ${\d}\lambda$, $\d h_1$, and $\d h_2$ 
    are independent vectors. Hence, $\dotp{\d h_1 \times \d h_2} {{\d} \lambda} 
    \neq 0$.
\end{proof}
Lemma~\ref{lem:diffeo} shows that $\sigma$ is a diffeomorphism onto its image, hence, it is a valid coordinate transformation. We use this coordinate transformation to prove that when the transformed state $z \to 0$ this implies that the system state $x \to \cC$. This fact is formalized in our main result, which is stated in the next theorem.
\begin{theorem}\label{thm:stab}
For the closed-loop system $\dot z = A_cz+B_c\nu$, where
$A_c = \operatorname{blkdiag}(A_{c4},A_{c2},A_{c3},A_{c2})$ and $\nu$ given by~\eqref{eq:nu}, the set $\cX^\ast$ given in~\eqref{eq:X-star} is exponentially stable. Moreover, the transformed state $\lambda$ is bounded.
\end{theorem}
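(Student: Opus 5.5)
The plan is to work entirely in the coordinates supplied by Lemma~\ref{lem:diffeo}. On $\cN_{\cX^\ast}$ the map $\sigma$ is a diffeomorphism, so the closed-loop system can be written as $\dot z = (A_c - B_cK)z$ with $K=\operatorname{blkdiag}(K_1,K_2,K_3,K_4)$, together with $\dot\xi = \alpha(z,\xi)$ for some smooth $\alpha$. The $z$-subsystem is exactly the decoupled Brunovsk\'y chains \eqref{eq:zi24}--\eqref{eq:z1z3}. This relies on the feedback \eqref{eq:ut}, \eqref{eq:tau} being well defined on $\cN_{\cX^\ast}$, which holds by Assumption~\ref{asp:pp} and $\det(\tilde A)\neq 0$. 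Since each $K_i$ is stabilizing, there are constants $c,\beta>0$ such that $\|z(t)\|\le c e^{-\beta t}\|z(0)\|$. A key point is that $\dot z$ does not depend on $\xi$, so this bound holds uniformly along the path.

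The next step is to transfer this estimate to the original state. Define the distance to the set, $\|x\|_{\cX^\ast}$. The set $\cX^\ast$ is compact by Lemma~\ref{lem:diffeoactness}, and $\sigma$ is a diffeomorphism on a neighborhood of it with $\sigma(\cX^\ast)=\{0\}\times\sigma_\xi(\cX^\ast)$. After shrinking to a compact tubular neighborhood $\overline{\cN}$, we can therefore find constants $k_1,k_2>0$ with
\begin{equation*}
k_1\|x\|_{\cX^\ast}\le \|z(x)\|\le k_2\|x\|_{\cX^\ast}.
\end{equation*}
The upper bound follows because $z$ is smooth and vanishes on $\cX^\ast$, so it is Lipschitz on the compact set. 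The lower bound follows because $\mathrm{d}\sigma$ is invertible on $\cX^\ast$, so $\mathrm{d}z$ has full rank 11 transverse to $\cX^\ast$; combined with compactness and a contradiction/sequence argument, this gives the inequality.

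Next I will use forward invariance. Choose $\delta>0$ small enough that the sublevel set $\{\|z\|\le \delta\}$ intersected with the tube lies inside $\overline{\cN}$. Because $\|z(t)\|$ never exceeds $c\|z(0)\|$, trajectories that start with $\|x(0)\|_{\cX^\ast}\le \delta/(ck_2)$ cannot leave the region where the coordinates are valid. I expect this to be the main obstacle, and the potential gap lies in the $\xi$-direction. Since $\xi\in\rS^1$ lives on a compact set, the tube does not need to be truncated in $\xi$, so the coordinate neighborhood is exited only if $\|z\|$ grows, which is excluded. Within this region the bound gives $\|x(t)\|_{\cX^\ast}\le (ck_2/k_1)e^{-\beta t}\|x(0)\|_{\cX^\ast}$, which is exponential stability of $\cX^\ast$. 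Invariance of $\cX^\ast$ follows because $z(0)=0$ implies $z(t)\equiv 0$.

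Finally, boundedness of $\lambda$ is immediate, since $\lambda=\Lambda\circ\pi$ takes values in the compact set $\rS^1$; equivalently, the arc-length parameter lies in $[0,L)$ with $L$ the length of $\cC$. If one instead wants the lifted real-valued $\xi$, I would use $\dot\xi=\alpha(z,\xi)$ together with continuity of $\alpha$ on the compact closure of the tube. This gives $|\dot\xi|\le M$, so the lift is bounded on finite intervals while the projected value stays in $\rS^1$. I would state the theorem's claim in the $\rS^1$ sense.
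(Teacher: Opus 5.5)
Your proposal is correct and follows the same route as the paper's proof: exponential decay of the decoupled $z$-chains, boundedness of $\xi$ because it lives on the compact $\rS^1$, and transfer back to $x$ through the diffeomorphism $\sigma$ of Lemma~\ref{lem:diffeo}. The paper compresses that last step into ``since $\sigma$ is a diffeomorphism.'' Your norm equivalence $k_1\|x\|_{\cX^\ast}\le\|z(x)\|\le k_2\|x\|_{\cX^\ast}$ on a compact tube and your no-escape argument supply exactly the details it leaves implicit; the no-escape step is cleanest phrased as $\overline{B_\delta}\times\rS^1\subset\sigma(\cN_{\cX^\ast})$ in $(z,\xi)$-coordinates.
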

\begin{proof}
    We write $\cX^\ast$ in the $(z, \xi)$-coordinates as
    $
        \sigma(\cX^\ast) = \{(z, \xi) \in \cZ : z = 0\}.
    $
    The $z$-subsystem is made exponentially stable by the choice of inputs~\eqref{eq:nu}. The $\xi$-subsystem 
    evolves on a compact set $\rS^1$. Hence, the $\lambda$-dynamics is bounded. Therefore, $\sigma(\cX^\ast)$ is exponentially stable. Since $\sigma$ is a diffeomorphism by Lemma~\ref{lem:diffeo}, $\cX^\ast$ is exponentially stable.
\end{proof}

Theorem~\ref{thm:stab} proves that $\cX^\ast$ is exponentially stable. The exponential stability of $z^1$-dynamics and $z^2$-dynamics mean that any initial condition close enough to $\cC$ results in a motion that converges to $\cC$. Moreover, any motion starts in $\cX^\ast$ remains in $\cX^\ast$. As a result objective \ref{goal:1} and \ref{goal:2} are fulfilled. The exponential stability of $z^3$-dynamics and $z^4$-dynamics guarantees that $v \to v_d \rho$ and $\psi \to \psi_d$. This fulfills objective~\ref{goal:3}. Hence, the control input given by thrust~\eqref{eq:ut} and torque~\eqref{eq:tau}, where $\nu$ is designed according to~\eqref{eq:nu}, indeed solves the PFP.

%%%%%%%%%%%%%%%%%%%%%%%%%%%%%%%%%%%%%%%%%%%%%%%%%%%%%%%%%%%%%%%%%%%%%%%%%%%%%%%%
\section{Simulation}
\label{sec:simulations}
This section demonstrates the QSF controller on two example paths: a horizontal circle and a twisted curve. %Our controller ensures convergence to $\cC$ and invariance of $\cX^\ast$. 
Using QSTFL, motion invariance is highlighted in subsection~\ref{sec:eg1} and path convergence is illustrated in subsection~\ref{sec:eg2}. To have more realistic simulations in both examples, the QSF is 
tested on the non-parameterized UAV model given in~\eqref{eq:UAV_geo}. The UAV mass and inertial matrix are chosen as $m = \SI{2}{\kilogram}$ and $J = \diag{(0.09,0.09,0.15)}\unit{\kilogram \cdot \meter\squared}$. The proposed controller is simulated in MATLAB\footnote{Code and animation at \hyperlink{https://gitlab.com/a5akhtar/quasistatic-tfl-uav/}{https://gitlab.com/a5akhtar/quasistatic-tfl-uav/}.}.

\subsection{Example 1: horizontal circle}\label{sec:eg1}
The path is a horizontal circle with constant traversal velocity, where the UAV heading is directed toward the center at all times — a configuration relevant to applications such as firefighting.   Fig.~\ref{fig:inv_screenshot} illustrates the invariance of $\cX^\ast$. The UAV is initialized on $\cC$ at rest with the heading pointing outward, i.e., away from the center. The heading angle is corrected, while the UAV never leaves the path $\cC$. Fig.~ \ref{fig:inv_inputs} shows the corresponding input that generated this motion.

\begin{figure}[ht]
    \centering
    \subfloat[Snapshots of motion.\label{fig:inv_screenshot}]{%
        \includegraphics[width=0.5\columnwidth]{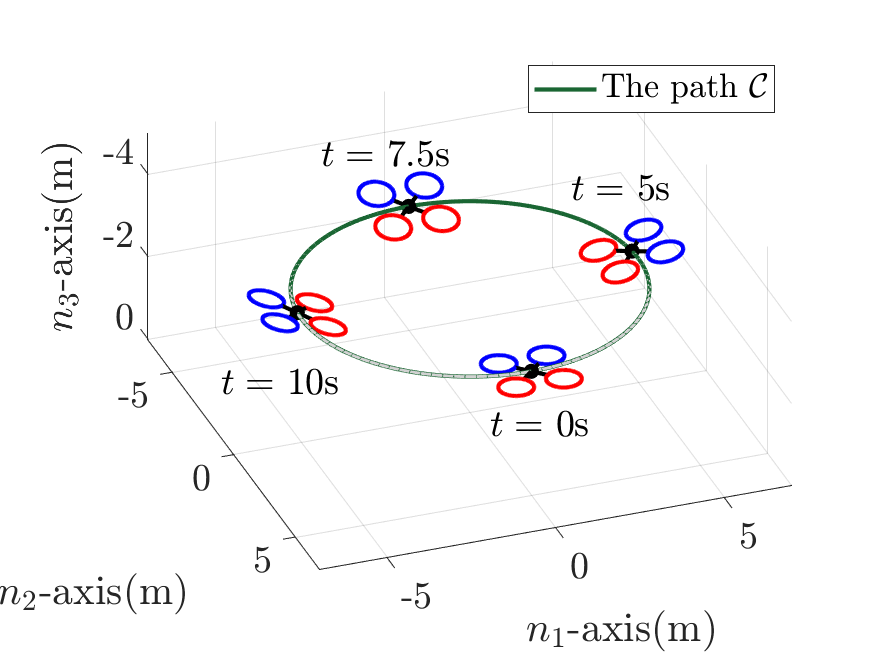}}%
    \hfill%
    \subfloat[Input signal.\label{fig:inv_inputs}]{%
        \includegraphics[width=0.5\columnwidth]{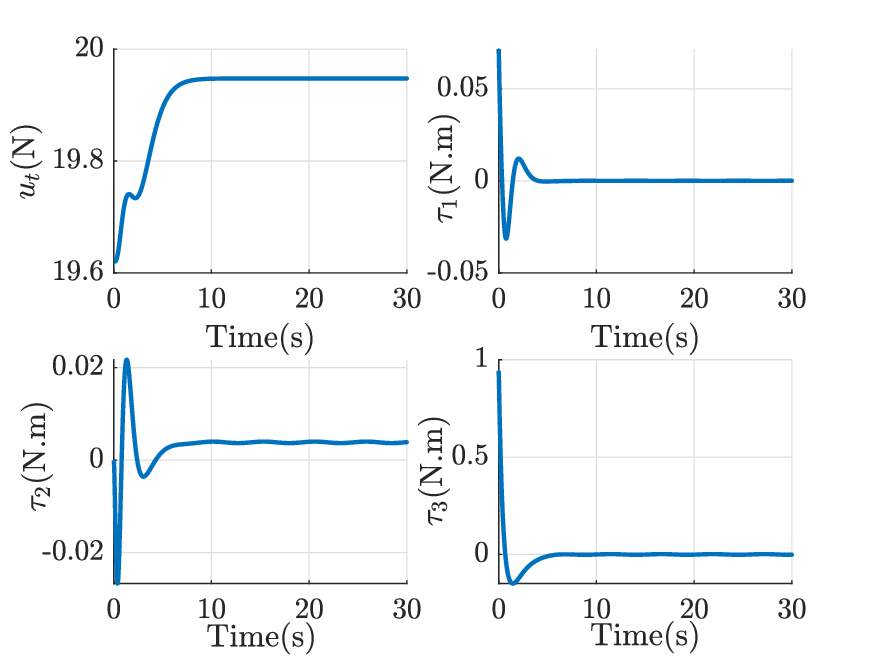}}%
    \caption{Controlled-invariance results under the QSTFL controller.}
    \label{fig:inv_combined}
\end{figure}

\subsection{Example 2: twisted path}\label{sec:eg2}
Consider a path formed by the intersection of the following two surfaces $h_1 = p_1 ^ 2 + p_2 ^ 2 - 4$ and $h_2 = -\sin(2p_1) + p_2 + p_3 + 3$. On $\cC$, the desired velocity and heading angle are chosen, respectively,  $v_d = \SI{1}{\meter\per\second}$ and $\psi_d = \SI{0}{\radian}$. %Hence, the output function is written as
Hence, the components of the output function $h$ are $h_1 =  p_1 ^ 2 + p_2 ^ 2 - 4$, $h_2 = -\sin(2p_1) + p_2 + p_3 + 3$, $h_3 \!\!\!\!\!\!\!\!\!\! = \!\!\!\!\!\!\!\!\!\!\!\! \frac{2 \cos  \left(2 p_{1} \right) p_{2} v_{3} +\left(-v_{2} +v_{3} \right) p_{1} +p_{2} v_{1}}{\sqrt{4 \left(\cos^{2}\left(2 p_{1} \right)\right) p_{2}^{2}+4 \cos  \left(2 p_{1} \right) p_{1} p_{2} +2 p_{1}^{2}+p_{2}^{2}}}-1$, and $h_4 = \psi$.
The UAV is initialized as $p(0) = \col{4 \cma 5 \cma -1}\unit{\meter}$, $v(0) = \col{-0.1 \cma 0.2 \cma -0.2} \unit{\meter\per\second}$, $\eta(0) = \col{-\pi/10 \cma \pi/8 \cma -\pi/4} \unit{\radian}$, and $\Omega(0) = \col{-0.9 \cma 0.1 \cma 0.5}\unit{\radian\per\second}$.
Table~\ref{tab:gains} shows the gains along with eigenvalues of all the closed-loop $z^i$-dynamics.
\begin{table}[H]
    \centering
    \caption{Gains and eigenvalues of the $z^i$-dynamics \eqref{eq:z1z3} and \eqref{eq:zi24}.}
    \label{tab:gains}
    \begin{tabular}{ccc}
        \toprule
        {Dynamics} &           Gains, $K_i$ & Eigenvalues \\
        \midrule
        $z^1$ & $[6.24\cma 16.76\cma 16.22 \cma    6.7]$ & $\{-1\cma -2\cma -1.3\cma -2.4\}$\\ 
        $z^2$ & $[2\cma 3]$ & $\{-1\cma -2\}$\\ 
        $z^3$ & $[2.6\cma   5.9 \cma    4.3]$ & $\{-1\cma -2\cma -1.3\}$\\ 
        $z^4$ & $[2\cma 3]$ & $\{-1\cma -2\}$\\
        \bottomrule
    \end{tabular}
\end{table}
With the above data, the motion in Fig.~\ref{fig:snapshot_conv} is generated, where the path convergence is illustrated. 
The output signals are shown in Fig.~\ref{fig:output_conv}.
\begin{figure}[!htbp]
    \centering
    \subfloat[The QSTFL guarantees convergence to path.]{%
        \includegraphics[width=0.5\columnwidth]{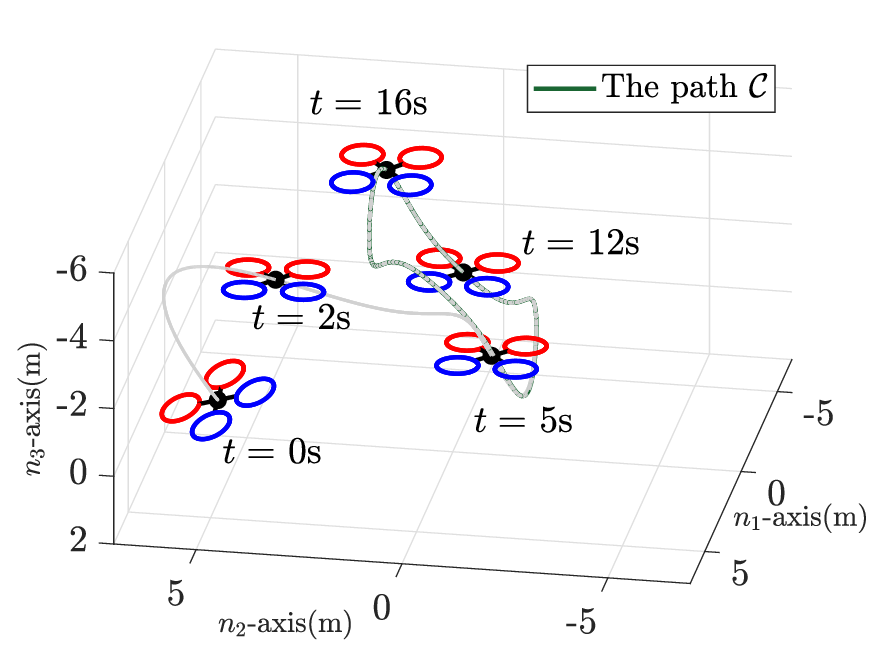}%D
         \label{fig:snapshot_conv}}
    \hfill
    \subfloat[The output signal.]{%
        \includegraphics[width=0.5\columnwidth]{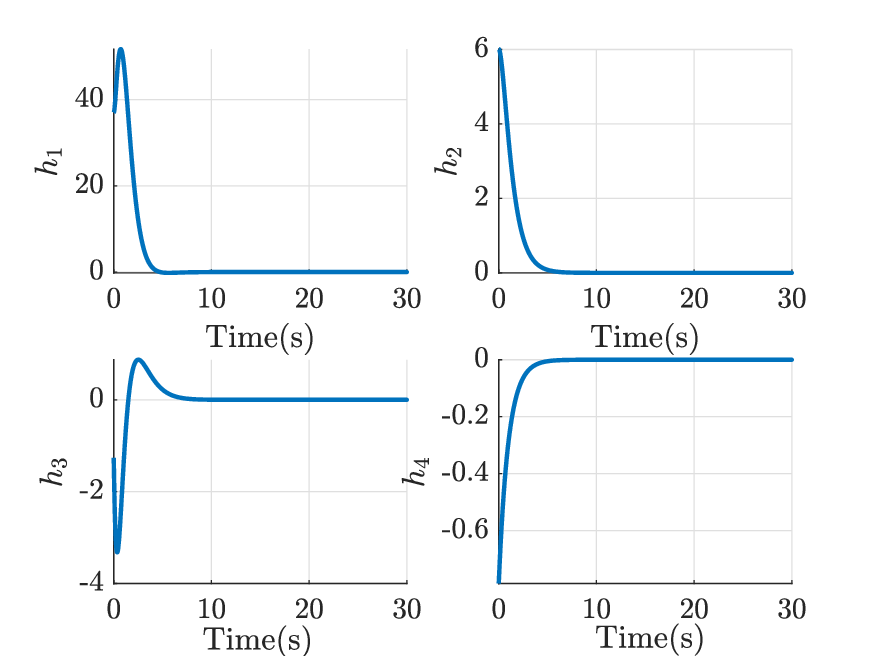}%
        \label{fig:output_conv}}
    \caption{Snapshots, and input/output time signals for example 2.}
    \label{fig:two_column_subfigs}
\end{figure}

%%%%%%%%%%%%%%%%%%%%%%%%%%%%%%%%%%%%%%%%%%%%%%%%%%%%%%%%%%%%%%%%%%%%%%%%%%%%%%%%

\section{Conclusions}
\label{sec:conclusion}

This paper presented a QSTFL controller for the quadrotor path following problem, achieving exponential stability, controlled-invariance, and direct tangential velocity and yaw control along the path. Unlike dynamic feedback designs \cite{AkhWasNie2012, RozMag2012}, the proposed controller does not require additional states, eliminating thrust-derivative measurements, double integration, and simplifying gain tuning. The thrust force is computed algebraically, reducing the decoupling matrix from $4\times 4$ to $3\times 3$ and lowering computational complexity. Theoretical guarantees include a diffeomorphic coordinate transformation, compactness of the zero-dynamics manifold, and local exponential stability of the path-following set (Lemma~\ref{lem:diffeoactness}, Lemma~\ref{lem:diffeo}, and Theorem~\ref{thm:stab}). Numerical simulations validated the theoretical results, demonstrating both motion invariance and path convergence. In summary, this paper presents a local solution to Problem~\ref{prob:Quasi-PF-Problem} using quasi-static feedback.

\addtolength{\textheight}{-2cm}   % This command serves to balance the column lengths
                                  % on the last page of the document manually. It shortens
                                  % the textheight of the last page by a suitable amount.
                                  % This command does not take effect until the next page
                                  % so it should come on the page before the last. Make
                                  % sure that you do not shorten the textheight too much.

%%%%%%%%%%%%%%%%%%%%%%%%%%%%%%%%%%%%%%%%%%%%%%%%%%%%%%%%%%%%%%%%%%%%%%%%%%%%%%%%

%%%%%%%%%%%%%%%%%%%%%%%%%%%%%%%%%%%%%%%%%%%%%%%%%%%%%%%%%%%%%%%%%%%%%%%%%%%%%%%%

% \printbibliography

{\footnotesize
\bibliographystyle{IEEEtran}
\bibliography{conf-abbr,journal-abbr,ref,adeelbib}
}

%%%%%%%%%%%%%%%%%%%%%%%%%%%%%%%%%%%%%%%%%%%%%%%%%%%%%%%%%%%%%%%%%%%%%%%%%%%%%%%%
%\section*{APPENDIX}

%\section*{ACKNOWLEDGMENT}

\end{document}